\def\<{\leqslant}           
\def\>{\geqslant}           
\def\[{[\![}
\def\]{]\!]}
\newcommand{\gamovertwo} {{\frac{\gamma^2}{2}}}
\newcommand{\gammovertwo} \gamovertwo
\newcommand{\gammuovertwo} \gamovertwo
\newcommand{\gammuepstovertwo} \gamovertwo
\newcommand{\gammubarepstovertwo} \gamovertwo
\newcommand{\noncr} {\nonumber\\}
\newcommand{\beasnum}{\begin{eqnarray}}
\newcommand{\eeasnum}{\end{eqnarray}}
\newcommand{\beas}{\begin{eqnarray*}}
\newcommand{\eeas}{\end{eqnarray*}}
\newcommand{\be}{\begin{equation}}
\newcommand{\ee}{\end{equation}}
\newcommand{\ba}{\begin{array}}
\newcommand{\ea}{\end{array}}
\newtheorem{theorem}            {Theorem}[section]
\newtheorem{sideremark}         [theorem]{Remark}
\newtheorem{sideeg}           [theorem]{Example}
\newtheorem{sideconj}           [theorem]{Conjecture}
\def\argmin                     {\mathop{\rm argmin}}
\def \aeq {&=}
\def \adefeq {&:=}
\newcommand{\qed} {\hskip 0.2em\lower 0.7ex\hbox{\vbox{\hrule
\hbox{\vrule height 1.2ex\hskip 0.4em\vrule height 1.2ex}
\hrule}}}
\def \sensorBall{\mathcal{S}^M}
\def \figname {fig:minplusrobustest:}
\def \eqnname {eq:minplusrobustest:}
\def \secname {sec:minplusrobustest:}
\def \traX {\mathcal{X}}
\def \wtformeas {H}
\def \dpoperator {\mathbf{S}}
\author{
\authorblockN{Srinivas Sridharan}
\authorblockA{Dept. of Mechanical and Aerospace Engineering \\
University of California San Diego \\
Email: {srsridharan@eng.ucsd.edu}}
\thanks{Research partially supported by 
NSF grant DMS-0808131 and AFOSR.}
}
\title{Deterministic  filtering and max-plus methods for robust state estimation in multi-sensor settings}
\begin{document}
\maketitle
\begin{abstract}
A robust (deterministic) filtering approach to the problem of optimal sensor selection is considered herein. 
For a given system with several sensors, at each time step the output of one of the sensors must be chosen in order to obtain the best 
state estimate. We reformulate this problem in an optimal control framework which can then be solved using  dynamic programming.
In order to tackle the numerical  computation of the solution in an efficient manner, we exploit the preservation of the min-plus structure
of the optimal cost function when acted upon by the dynamic programming operator. 
This technique yields  a grid free numerical approach to the problem. Simulations on an example problem serve to  highlight the efficacy of this  generalizable  approach to robust  multi-sensor  state estimation.

\end{abstract}
\section{Introduction}\label{\secname introduction}

Various problems of practical interest involve the task of estimating the state of the system 
of interest under under the presence of a  number of sensors. For example, a \lq sensor fusion\rq\, problem involves determining an optimal way to combine the measurements from multiple sensors each of which may measure a subset of the states of the system.   This  is, in essence, a state estimation problem. 

 A more involved problem arises when the state estimation strategy is given the freedom to 
dynamically select the set of sensors used. Such a \lq sensor scheduling\rq\, problem can be 
used to model scenarios encompassing robust estimation under potential sensor faults;  in addition this framework may  be used to preselect a set of sensors to be used, based on constraints  on the available communication bandwidth or energy limitations on transmitting sensed information.

In the domain of optimal state estimation, there are two distinct approaches to estimation/filter design which have been applied in the design of estimators for 
systems subjected to process and measurement noise.  The most 
well known approach to filter design is the Kalman filter developed
for linear systems in the 1960\rq s \cite{kalman1960new}. 
The Kalman filter
uses the statistics of the noise/measurement processes in order
to compute the weightings on the measurement error, used to update the
state estimate, as new measurements are observed. An alternative approach
is the minimum energy filtering technique developed by Mortensen \cite{mortensen1968}. This interpretation views the optimal state estimation problem as an
optimal control problem wherein the objective is to minimize the energy
of the noise process required to explain the observations.
The resulting filter obtained via both these approaches \textit{for the 
linear system case with white noise processes} (and a quadratic
energy function with $L^2$ noise processes) turns out to be the 
Kalman filter \cite{jazwinski1970stochastic,hijab1980minimum,willems2004deterministic}.

The articles \cite{rago1996censoring,miller1997optimization}  address the sensor scheduling problem in the case of linear systems with Gaussian input 
noise.  An  alternative approach for optimal sensor scheduling in nonlinear systems, formulated via imposing a cost on the switching of sensor choice and the error  in estimation due to this choice, was undertaken in \cite{baras1989optimal}. 
The solution for this case depends on the solution of a Riccati differential equation 
arising from the choice of sensors, and can be solved offline. Generalized versions of this problem
for the case of uncertain systems with linear (albeit time varying) dynamics using integral quadratic
constraints to model the uncertainty of the system were studied in \cite{savkin2001problem}. The latter uses a 
deterministic filtering interpretation of the estimator (c.f. \cite{fleming1997deterministic,hijab1980minimum}).

In this article we study an approach to solving problems of sensor scheduling for systems with 
linear dynamics which involves solving a modified version of the deterministic filtering problem. The organization of the 
article is as follows. 
In Sec.~\ref{\secname sysAndProbDescription}  we  formulate the estimation problem as an 
associated optimal control problem which may be solved via dynamic programming. Hence, the methods employed herein  generalize  deterministic filtering to deal with sensor selection strategies.
 In Sec.~\ref{\secname minplusandstructpres} we describe how the Hamilton-Jacobi-Bellman (HJB) equation may be solved using a new class of computational approaches to solving such partial differential equations.  
  The underlying idea involves 
 modeling the value function as an element of a semi-convex function space whose structure
is preserved under the dynamic programming operator.  Such a technique belongs to a class of recently developed methods termed max-plus methods which have led to intriguing advancements 
in computational tractability for optimal control problems in several domains \cite{fleming2000max,mceneaney2006mpm,fleming2010max,sjmmcdc2010,mceneaney2011idempotent}.
 The techniques introduced herein can be extended to certain classes of nonlinear dynamics and nonlinear output functions as well \cite{kallapur2012min}.  
In Sec.~\ref{\secname example} we demonstrate the application of the theory  developed, using an example system with five sensors. We then conclude with a description of problems of interest and avenues for further exploration in Sec.~\ref{\secname conclusion}.
%


%
%

\section{System description}\label{\secname sysAndProbDescription}
In this article we consider the class of systems with discrete time dynamics of the form
\begin{align}
x_{k+1} \aeq \tilde{A} x_k + \tilde{B} u_k + w_k, \qquad x_k , w_k \in \mathbb{R}^n, \label{\eqnname fwdsyseqn}
\end{align}
where $x_{k}$ denotes the system state at a time instant $k$ and $w_k$ is a disturbance signal.
The output equation for the system with $M$ sensors is 
\begin{align}
y^j_k \aeq C^j x_k + \eta^j_k,   \qquad j \in \{1, 2, \ldots M\}. 
\end{align}
where $\eta^j_k$ is  measurement noise in the $j$ th sensor at time step $k$. We assume that
the $M$ sensors above are such that the pairs $(A, C^j)$ are observable for each $j$. 

In the results to be presented in this article, we make use of a time reversed representation of the dynamics rather than the time incremental version in \eqref{\eqnname fwdsyseqn}. This  time-inverted system has the model
\begin{align}
x_{k+1} \aeq {A} x_k + {B} u_k + w_k, \qquad x_k , w_k \in \mathbb{R}^n, \label{\eqnname syseqn}
\end{align}
and we denote the solution to this dynamics at any time step $k$ given the state at a different time instant $j$, a control 
signal $u$ and a disturbance $w$ as $$\traX (j|x_{k+1}=x, u(\cdot),w).$$
For this $M$ sensor system we pose the dynamic sensor selection problem as a modification of the 
deterministic filtering problem as follows. We commence by modeling the choice of sensors at each 
time step  as an element $\lambda$ of the unit ball $\sensorBall$. Specifically, if only one sensor 
say the $q$ th sensor) is chosen, then the vector $\lambda$ is an $M$ dimensional vector with $1$ 
at the $q$-th element and $0$ at the others. Hence $\lambda$ has a probabilistic interpretation 
in terms of the relative belief in each sensor\rq s accuracy. 

For any particular disturbance signal realization and the control signal  $\lambda(\cdot)$  for the choice of sensor, we 
consider a cost function of the form
\begin{align}
J(\lambda, w, x) \aeq \Big\{ \sum_{k=0}^K{(w_k)^T (w_k)} + \|\hat{x}(0) - \tilde{x}_0\|^2_L +
\noncr & \sum_{k=0}^K{\sum_{j = 0}^M \|y^j_k - C^j x_k\|^2_\wtformeas  \lambda^j_k },  \Big\} \label{\eqnname costfn}
\end{align}
with weighting matrices $\wtformeas$ and  $L$ on the measurement error and state estimation error respectively.
Here $\tilde{x}_0$ is the assumed value of the initial state of the system and $\hat{x}(k)$ is the estimated state of the system at time $k$ under the dynamics   \eqref{\eqnname syseqn}. Hence, the cost function penalizes the disturbance, deviations from the system dynamics (in terms of the  
deviation in the initial states and the measurement error). This form of the cost function differs from the standard form of the robust (deterministic) filter \cite{fleming1997deterministic}  in the following manner. Here the choice of  the sensors is incorporated into the cost of using the final term in \eqref{\eqnname 
costfn}. Note that the choice $\lambda^k_j$ of sensor beliefs is allowed to vary at each time step. We may also penalize changes in the choice of sensors as was done in \cite{baras1989optimal} or study a
one time step finite horizon control problem as in \cite{savkin2001problem}. 

The optimal cost function for this problem is 
\begin{align}
V_{k}(x) \adefeq \inf_{\substack{\lambda \in {\sensorBall}(\cdot)\\ w \in \mathcal{W}}} J_k(x,\lambda,w),
\end{align}
where $\sensorBall(\cdot)$ and $\mathcal{W}$ are the piecewise continuous signals taking up values in $\sensorBall$ and $\mathbb{R}$ respectively. 
The value function at any time $k$ leads directly to the optimal  state estimate  via the relation
\begin{align}
x^*_k := \arg \inf_{x \in \mathbb{R}^n} V_k(x).
\end{align}

Now, we apply the dynamic programming principle from optimal control theory to obtain 
\begin{align}
V_{k+1}(x) \aeq \inf_{\substack{\lambda \in \sensorBall\\ w \in \mathbb{R}^n}} \Big\{ V_{k}(\traX (k|x_{k+1}=x, u, w) \noncr &+ \|w\|^2 + \sum_{j = 0}^M \|y^j_{k+1} - C^j x\|^2_\wtformeas  \lambda^j_k  \Big\}.
\end{align}
This equation, can be cast in the form of an operator $\dpoperator$ acting on the value function as follows:
\begin{align}
V_{k+1}(x) \aeq \dpoperator\Big[V_k\Big](x). \label{\eqnname dpoperatordef}
\end{align}

Standard approaches to solve for the optimal cost function (value function) involve the solution of the associated Hamilton-Jacobi-Bellman partial differential equation which is the infinitesimal limit
of the above equation. However, this leads to issues of large computational complexity, especially in cases with nonlinear dynamics and a higher number of dimensions of the state space. In this work, we apply a technique which has shown much promise in enabling the solution to certain classes of optimal control/ optimal filtering problems \cite{fleming2010max,mceneaney2008cdf} -- for both linear and nonlinear systems. This approach makes use
of the fact that the dynamic programming operator, \eqref{\eqnname dpoperatordef} above, is linear on the space of semi-convex functions. Thus if we prove that the structure of the value function is preserved under the dynamic programming operator and the parameterization can be done independent  of the state space (i.e. the parameters of the value function do not depend on the point in space at which the 
value function is evaluated), then we can obtain a numerically efficient technique to solve optimal 
control problems which can be cast into this form by repeated applications of the dynamic programming operator (while storing only the parameter values).  Further background details and 
specific applications of this concept to various problems can be found in \cite{fleming2000max,kolokol2001idempotent,akian2005solutions,mceneaney2008cdf,mceneaney2009complexity,sridharanMtns2010,mceneaney2011idempotent}.

\section{The min-plus expansion and the propagation of the cost function}\label{\secname minplusandstructpres}
In this section we demonstrate that under one time step of propagation by the dynamic programming operator (as in \eqref{\eqnname dpoperatordef}) a specific form of the value function  is preserved. As indicated previously, this feature leads to an efficient approach to solve for the value function (and hence, to the state estimate). The value function thus determined may be used to obtain the  optimal state estimate.   We start with a description of the solution technique for the cost function

\subsection{Min-plus approach to solving the  optimal control problem}
Motivated by the terminal form of the value function 
\begin{align}
V_0(x) = {\|x - \tilde{x}_0\|_L}^2,
\end{align}
we assume that the value function at any time step $k$ can be written as follows
\begin{align}
V_k(x) = \bigoplus_{\alpha \in \Gamma_k} \Big\{ x^T Q^{\alpha}_a x + 2 Q^{\alpha}_b x + Q^{\alpha}_c \Big\}.\label{\eqnname quadformofvalfn}
\end{align}
Here $\Gamma_k$ is the index set at the time instant $k$ and the notation $\bigoplus$ denotes the minimization operation over the index set. We assume that the quadratics associated with the index set are convex, leading to a well defined state estimate.
\begin{theorem}
Given the form \eqref{\eqnname quadformofvalfn} of the value function at time $k$, the propagation under the dynamic programming operator  \eqref{\eqnname dpoperatordef} leads to a preservation in structure of the value function viz. there exists a set $\Gamma_{k+1}$ and spatially invariant  parameters 
$R^\beta_a$, $R^\beta_b$,  $R^\beta_c$,  for $\beta \in \Gamma_{k+1}$ such that the value function  at time $k+1$  is
\begin{align}
V_{k+1}(x) = \bigoplus_{\beta \in \Gamma_{k+1}} \Big\{ x^T Q^{\beta}_a x + 2 Q^{\beta}_b x + Q^{\beta}_c \Big\}.\label{\eqnname quadformofvalfnafteronetimestep}
\end{align}
\end{theorem}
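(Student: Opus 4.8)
The plan is to substitute the assumed quadratic min-plus form \eqref{\eqnname quadformofvalfn} into the dynamic programming recursion \eqref{\eqnname dpoperatordef} and exploit the fact that $\bigoplus$ is a pointwise minimum, so that it commutes with the infimum over the disturbance $w$ and the sensor weighting $\lambda$. First I would write $\traX(k \mid x_{k+1}=x, u, w)$ explicitly: since the time-inverted dynamics \eqref{\eqnname syseqn} read $x_{k+1} = A x_k + B u_k + w_k$, inversion yields $x_k = A^{-1}(x - Bu - w)$, an affine function of $(x,w)$. Substituting the representation of $V_k$ and pulling the minimization over the index set $\Gamma_k$ outside the $\inf$ over $(\lambda,w)$ (valid because $\inf_{w,\lambda}\min_\alpha = \min_\alpha \inf_{w,\lambda}$) reduces the task to analyzing, for each fixed $\alpha\in\Gamma_k$, the single quadratic $q_\alpha(z) = z^T Q^\alpha_a z + 2 Q^\alpha_b z + Q^\alpha_c$ composed with this affine map.

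The next observation is that the two inner minimizations decouple, since the sensor penalty $\sum_j \|y^j_{k+1} - C^j x\|^2_\wtformeas \lambda^j$ depends only on $(x,\lambda)$ while the term $q_\alpha(A^{-1}(x-Bu-w)) + \|w\|^2$ depends only on $(x,w)$. For the $\lambda$-minimization I would use that this penalty is linear in $\lambda$ with nonnegative coefficients $\|y^j_{k+1} - C^j x\|^2_\wtformeas$; minimizing a linear functional over the compact convex set $\sensorBall$ attains its value at an extreme point, and by the single-sensor (probabilistic) interpretation these extreme points are the standard unit vectors $e_j$, so the minimum equals $\min_j \|y^j_{k+1} - C^j x\|^2_\wtformeas$, itself a pointwise min of convex quadratics in $x$. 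For the $w$-minimization I would invoke partial minimization of a jointly convex quadratic: the map $(x,w)\mapsto q_\alpha(A^{-1}(x-Bu-w)) + \|w\|^2$ is a convex quadratic whose Hessian in $w$ is $2\big((A^{-1})^T Q^\alpha_a A^{-1} + I\big)\succ 0$, thanks to $Q^\alpha_a\succeq 0$ (the assumed convexity) together with the $\|w\|^2$ term. Hence the infimum is attained at a unique affine-in-$x$ minimizer, and eliminating $w$ by completing the square (a Schur-complement computation) leaves a convex quadratic in $x$ whose coefficients are built solely from $Q^\alpha_a, Q^\alpha_b, Q^\alpha_c, A, B, u$ and are therefore spatially invariant, i.e. independent of the evaluation point $x$.

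Combining the two pieces, each $\alpha\in\Gamma_k$ paired with each sensor index $j$ contributes a convex quadratic in $x$, obtained by adding the reduced quadratic from the $w$-step to $\|y^j_{k+1} - C^j x\|^2_\wtformeas$. Taking the outer $\bigoplus$ over $\alpha$ and the $\min$ over $j$, I would set $\Gamma_{k+1} = \Gamma_k \times \{1,\dots,M\}$ and read off $R^\beta_a, R^\beta_b, R^\beta_c$ for $\beta=(\alpha,j)$ directly from these sums, which gives exactly \eqref{\eqnname quadformofvalfnafteronetimestep}. The main obstacle, and essentially the only genuine computation, is the $w$-elimination: one must check that the partial minimization really yields a \emph{quadratic} (not merely a convex function) with $x$-independent coefficients, and that convexity of each reduced quadratic is preserved so that the induction can be iterated at the next time step. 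This rests on invertibility of $A$ and on positive semidefiniteness of $Q^\alpha_a$ and $\wtformeas$; producing the explicit closed form for the reduced coefficients and verifying these conditions is where the bookkeeping resides.
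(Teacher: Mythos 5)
Your proposal is correct and follows essentially the same route as the paper's proof: decouple the $\lambda$- and $w$-minimizations, reduce the sensor term to a pointwise minimum over $j$ by noting the linear-in-$\lambda$ cost is minimized at an extreme point of the simplex, eliminate $w$ by completing the square so the surviving quadratic in $x$ has state-independent coefficients, and take the new index set $\Gamma_{k+1} = \Gamma_k \times \{1,\dots,M\}$. The only incidental difference is that you invert $A$ explicitly, whereas the paper's matrices in the time-inverted model are already the backward-time ones, so its proof substitutes $V_k(Ax+Bu+w)$ directly with no inversion; your version is also somewhat more explicit about the interchange of the minimizations and the convexity bookkeeping, which the paper glosses over.
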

\begin{proof}
Assuming the form of the value function in \eqref{\eqnname quadformofvalfn}, we note that the 
action of the propagation equation \eqref{\eqnname dpoperatordef} leads to following recursion 
\begin{align}
V_{k+1}(x|u) \aeq  \noncr \inf_{\substack{w \in W \\ \lambda \in \sensorBall }} \Big\{ V_k(\traX(k|x_{k+1}&=x,u_
{k+1} = u,w_{k+1}=w))\ldots   \noncr + &  \|w\|^2 +    \sum_{j = 0}^M \|y^j_{k+1} - C^j x\|^2_\wtformeas  \lambda^j 
\Big\},
\noncr
\aeq \inf_{\substack{w \in W \\ \lambda \in \sensorBall }}\Big\{   V_k({A} x + {B} u + w)+ \ldots \noncr &  \|w\|^2 +    \sum_{j = 0}^M \|y^j_{k+1} - C^j x\|^2_\wtformeas  \lambda^j \Big\}. \label{\eqnname eqvalfnrec2}
\end{align}
Using the form of the value function \eqref{\eqnname quadformofvalfn} in  \eqref{\eqnname eqvalfnrec2} we have
\begin{align}
V_{k+1}(x|u) = \qquad \qquad & \noncr \inf_{\substack{w \in W \\ \lambda \in \sensorBall }}\Big\{\bigoplus_{\alpha \in \Gamma_k} \Big\{ ({A} x + {B} u &+ w)^T Q^{\alpha}_a ({A} x + {B} u + w) + \noncr
 2 Q^{\alpha}_b ({A} x + {B} u + w) + Q^{\alpha}_c \Big\}&
+   \|w\|^2 +  \ldots \noncr    \sum_{j = 0}^M \|y^j_{k+1} &- C^j x\|^2_\wtformeas  \lambda^j \Big\} \label{\eqnname valiter}
\end{align}
We note that for each $j$, since $w$ and $\|y^j- C^j x\|$ are independent, therefore the optimal 
value of $\lambda$  can be determined from the last term in \eqref{\eqnname valiter}.
Specifically the optimal value of a linear function of $\lambda$ on a compact set is given by a  point on the boundary of the compact set. This leads to one of the sensors being used in one time step with the others being turned off (in the case where the sensor dynamics can be altered at each time step). The optimal argument $\lambda^*$ has one element $\lambda_j$ equal to $1$  with all other elements of $\lambda$ set to $0$, where 
\begin{align}
 j:= \mathop{argmin}_j \|y^j_{k+1} - C^j x\|^2_Q \label{\eqnname proofofformpres0}
\end{align}
For simplicity of notation we denote 
\begin{align}
z : = Ax+Bu. \label{\eqnname redeffz}
\end{align}   
Note that for ever value of the state $x$, the value of $w$ which minimizes the quadratic portion of the cost function dependent on $w$ can be obtained analytically as follows. For a triple $(Q^\alpha_a,Q^\alpha_b, Q^\alpha_c)$, the corresponding  $w^*$ which minimizes the term 
\begin{align}
 (z+w)^T Q^{\alpha}_a (z+w) + 2 Q^{\alpha}_b (z+w) +  \noncr \qquad  Q^{\alpha}_c  +  \|w\|^2 \label{\eqnname proofofformpres1}
\end{align}
in the value function, can be shown to have the analytic form
\begin{align}
w^* = (I + Q^\alpha_a)^{-1} \Big[ -Q^\alpha_b - Q^\alpha_a z\Big].\label{\eqnname proofofformpres2}
\end{align}
Hence, for every $\alpha \in \Gamma_k$, there exists a disturbance value $w^*$ which can be optimally chosen to minimize the quadratic form ( in $w$) in the cost function. Substituting this value of $w^*$ in \eqref{\eqnname proofofformpres1} and using \eqref{\eqnname redeffz},  we obtain a quadratic form in $x$ which can be written as
\begin{align}
\bigoplus_{\alpha \in \Gamma_k} x^T P^\alpha_a x + 2 P^{\alpha}_b x + P^{\alpha}_c, \label{\eqnname valitereqn4}
\end{align}
where from    \eqref{\eqnname redeffz}, \eqref{\eqnname proofofformpres1} and \eqref{\eqnname proofofformpres2} it can be seen that $P^\alpha_a$, $P^{\alpha}_b$, $P^{\alpha}_c$ are obtained from the coefficients of quadratic terms in $x$ in \eqref{\eqnname proofofformpres1}  and using the term $Q^\alpha_a$, $Q^\alpha_b$, $Q^\alpha_c$ respectively.
Thus using \eqref{\eqnname proofofformpres0} and \eqref{\eqnname valitereqn4} the value iteration in \eqref{\eqnname valiter} simplifies to 
\begin{align}
V_{k+1}(x|u) \aeq  \bigoplus_{\alpha \in \Gamma_k} x^T P^\alpha_a x + 2 P^{\alpha}_b x + P^{\alpha}_c  +  \noncr  & \bigoplus_j \|y^j_{k+1} - C^j x\|^2_\wtformeas \label{\eqnname proofofformpres5}
\end{align}
Creating a new index set $\Gamma_{k+1} =  \Gamma_k \times \mathcal{M}$ and combining
 coefficients of equal powers of $x$, we can rewrite  \eqref{\eqnname proofofformpres5} as 
\begin{align}
V_{k+1}(x|u) \aeq  \bigoplus_{\beta \in \Gamma_{k+1}} x^T R^\beta_a x + 2 R^{\beta}_b x + R^{\beta}_c ,
\end{align}
where it can be seen that $R^\beta_a$ is obtained from $P^\alpha_a$ and terms from a subset of the $C^j$.  This completes the proof of the desired result.
\end{proof}

\subsection{Generating the state estimate}
We now indicate how the value function  obtained above is used to compute the optimal state estimate. 
Starting with the terminal cost, we apply the propagation procedure for $N$ time steps (i.e., for the entire duration of the estimation process) to obtain a value function of the form 
\begin{align}
V_{N}(x|u(\cdot)) \aeq  \bigoplus_{\gamma \in \Gamma_{N}} x^T R^\gamma_a x + 2 R^{\gamma}_b x + R^{\gamma}_c , \label{\eqnname vstateest}
\end{align}
The optimal state estimate at time step $N$  is defined as 
\begin{align}
x^*_N \aeq \argmin_{x \in \mathbb{R}^n} V_N(x|u(\cdot)). \label{\eqnname optstateest1}
\end{align}
 Now given \eqref{\eqnname vstateest}, and an unconstrained state space, we can solve for the state estimate \eqref{\eqnname  optstateest1} by determining the minimum for each quadratic in the basis expansion  \eqref{\eqnname vstateest}.  Thus
 \begin{align}
x^*_N \aeq \argmin_{x \in \mathcal{X}_N} V_N(x|u(\cdot)), \label{\eqnname optstateest2}
\end{align}
where 
\begin{align}
 \mathcal{X}_N \adefeq \{ - [ R^\gamma_a]^{-1}  R^\gamma_b, \quad \forall \gamma \in  \Gamma_{N}\}.
\end{align}
The set $ \mathcal{X}_N$ can be seen to be the set of the minima of the set of convex quadratic functions  in the expansion 
\eqref{\eqnname vstateest}.
\section{Example} \label{\secname example}
In this section we validate the theory developed thus far in this article, using an example problem as follows. 
We consider a first order linear system with forward time dynamics
\begin{align}
x_{k+1} \aeq A_{f} x_k + B_f u_k + B_{wf} w_k, \\
A_f &= 0.7, \quad B_f = 0.4, \quad B_{wf} = -0.7.
\end{align}
Without loss of generality, the value of $B_{wf}$ has been chosen in order to ensure the form of the backward dynamics in \eqref{\eqnname syseqn} i.e., the coefficient of the disturbance term becomes $1$.  We assume the presence of five sensors with linear output maps 
\begin{align}
y^j_k = C^j_k x_k,
\end{align}
where the $C^j$ belong to the set $\{1.5, -2, 1.7, 3.5, 1 \}$ (under nominal conditions). In 
order to apply the formulation of the filter in  Sec.~\ref{\secname minplusandstructpres} to this example, we assume weighting 
matrices $L = 5$, $H =3$. We apply the max-plus deterministic filter to this example system using a filter horizon of $5$ time steps. Assuming the case of a systemic failure of all but one of the sensors  - i.e. the set of $C^j$ becomes $\{0.01, 0.1, 0.2, 0.01, 1 \}$, we obtain the results indicated in
Fig.~\ref{\figname fig1} for the case of no errors in the initial state assumption and Fig.~\ref{\figname fig2} for the case of an initial state error (in the estimate).


\begin{figure}[htbp]
   \centering
   \subfloat[State estimation]{\includegraphics[width=0.55\textwidth]{./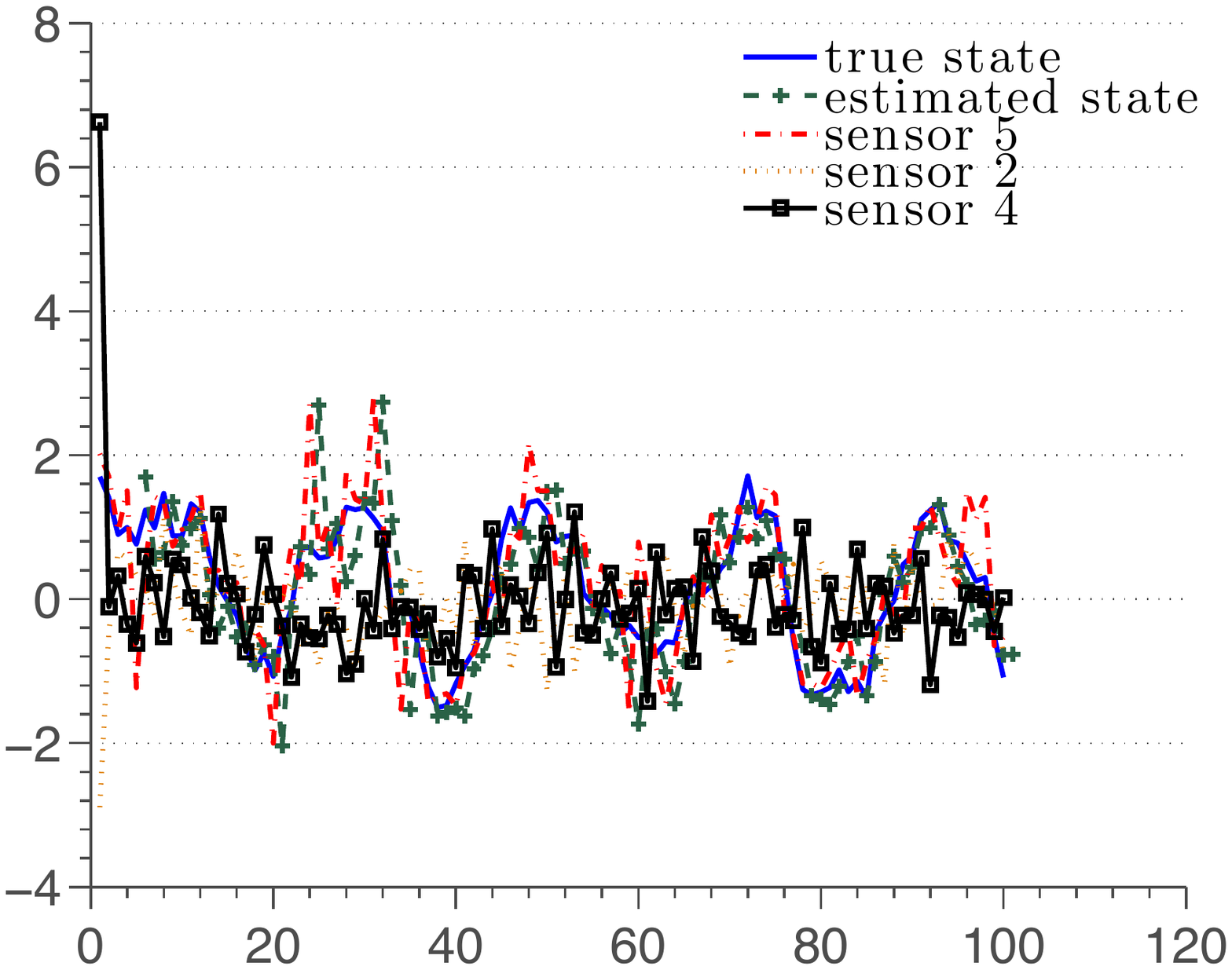}} \\         
  \subfloat[Sensor selected]{\includegraphics[width=0.5\textwidth]{./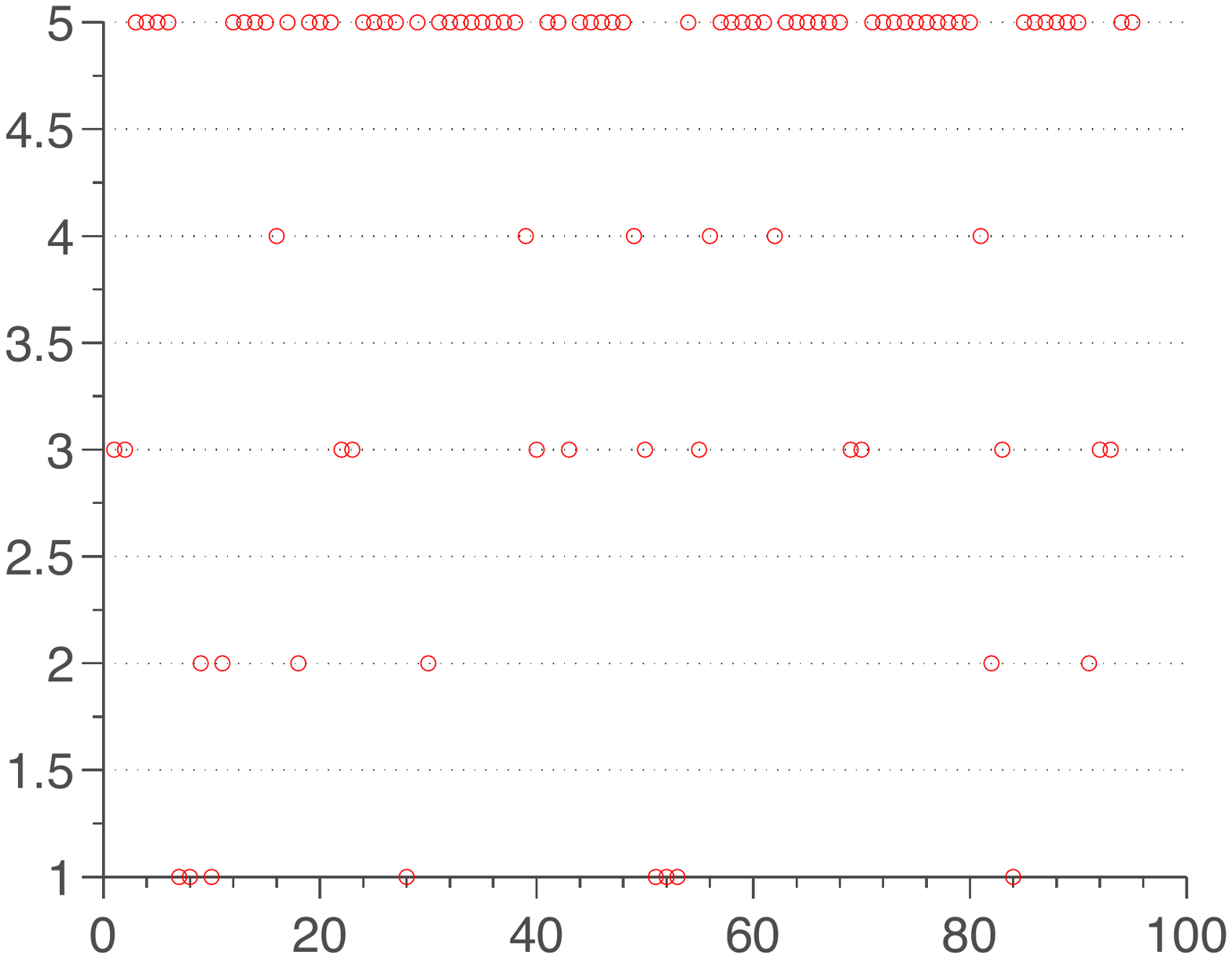}}
  \caption{The case of zero initial estimation error}
  \label{\figname fig1}
\end{figure}

\begin{figure}[htbp]
   \centering
   \subfloat[State estimation]{\includegraphics[width=0.55\textwidth]{./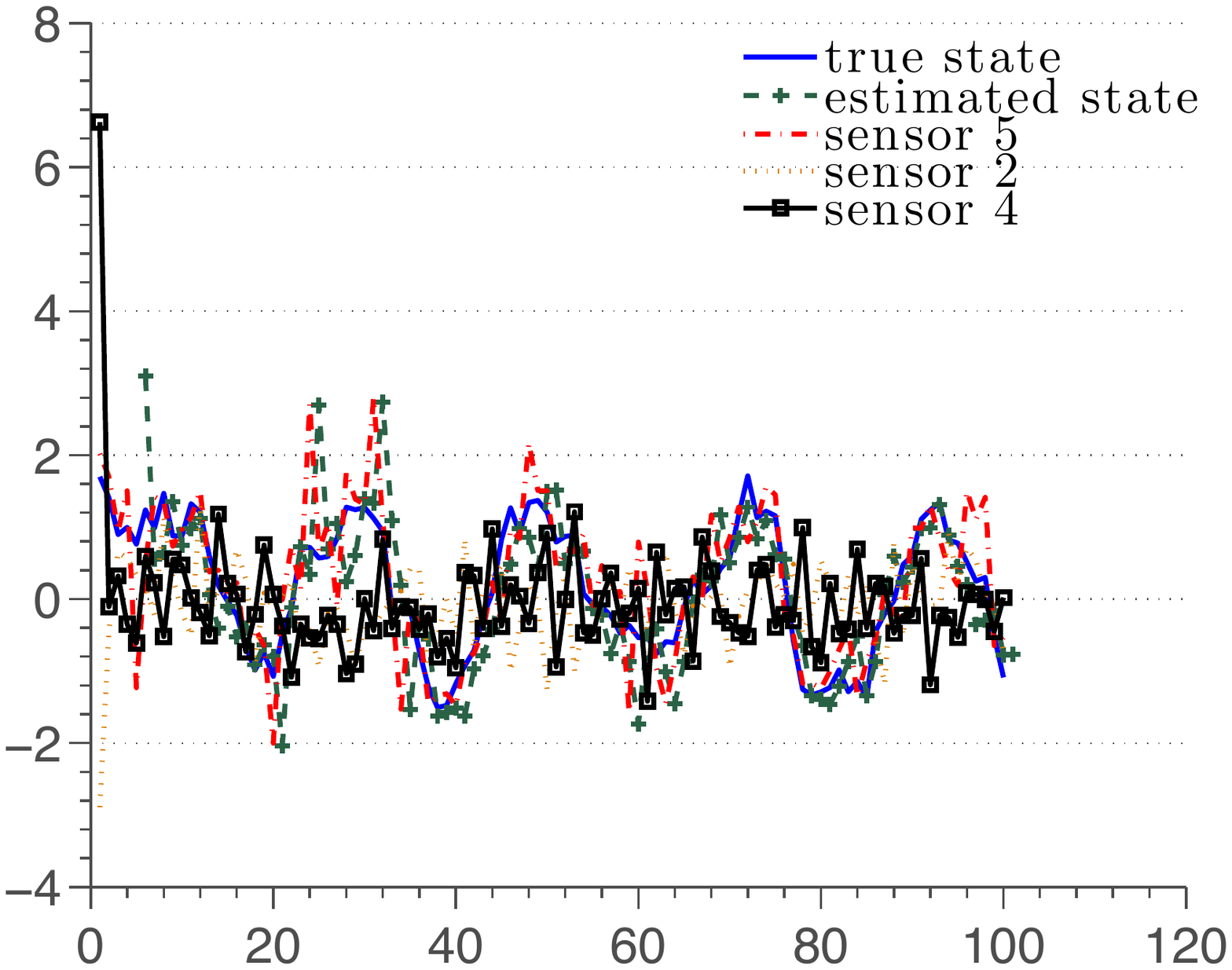}} \\         
  \subfloat[Sensor selected]{\includegraphics[width=0.5\textwidth]{./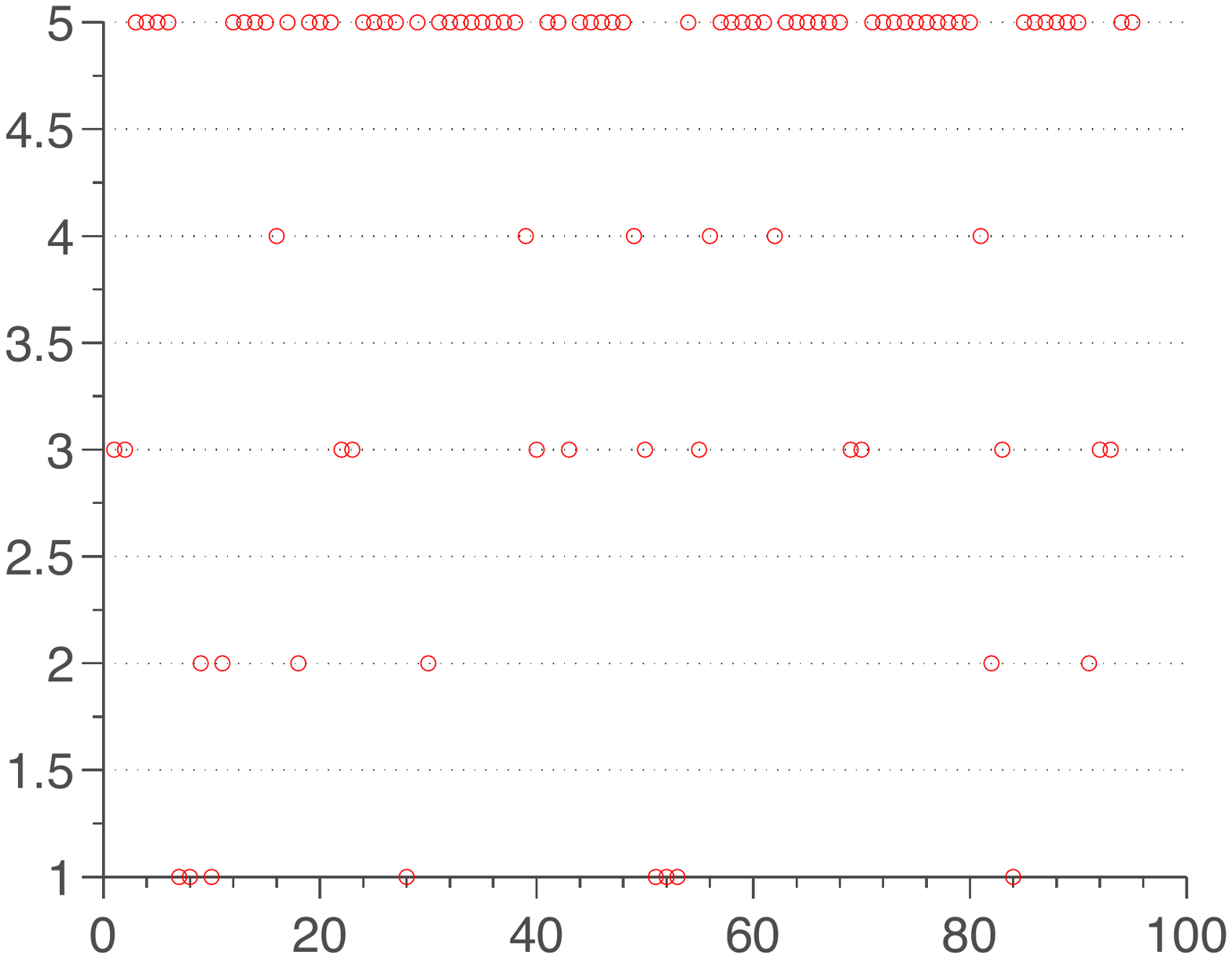}}
  \caption{The case of an initial estimation error}
  \label{\figname fig2}
\end{figure}

%
%

Thus it is seen that the estimation mechanism solves the  
 sensor selection problem in a robust manner. The algorithm uses the sensor which provides measurement signals which are used to minimize the cost function (which thereby yields a state estimate).  
It can be seen that the approach recovers the state despite noisy measurements and sensor failures.  A salient feature of the behavior of the estimation approach is that in both scenarios 
studied in the example (zero/non-zero initial estimation error), when the noise in the functioning sensor increases rapidly over a short time duration, the algorithm temporarily switches to a different
sensor which yields a lower cost function.  Once the functioning sensor yields a better performance, it is once again used for estimation as expected. 
\section{Conclusions and Future Directions} \label{\secname conclusion}
In this article we introduced the application of the deterministic filtering concept to sensor 
selection for robust state estimation. The resulting optimal control problem yielded a HJB equation 
which was solved via the max-plus approach which utilizes the linearity of the dynamic programming propagator in order to obtain an efficient method to solve the HJB equation at each time step. 
The technique yielded promising results when applied to the case of drastic sensor failures for a 
linear system.

 Some  fruitful  directions along which further research into this topic may be directed are: the extension of these methods to solve the sensor scheduling problem for nonlinear 
systems, to understand the behavior of the approach during short duration failures in sensors, 
the case of unmodeled system dynamics, incorporation of prior belief regarding sensor failure rates into the algorithm, the fusion of information from multiple sensors when not all of them satisfy the observability condition, the case where switching costs are imposed on  changes in the sensor vector  etc.  One other facet of this approach to sensor scheduling is that it benefits
from a non-zero input signal $u(\cdot)$. Thus it is useful to understand the requirement for persist excitation like conditions on the input to the system,  in order to ensure robust state estimation.

\balance
\bibliographystyle{unsrt}



\end{document}